\newtheorem{theorem}{Theorem}
\newtheorem{lemma}[theorem]{Lemma}
\newtheorem{definition}{Definition}
\newtheorem{corollary}[theorem]{Corollary}
\newtheorem{proposition}[theorem]{Proposition}
\newtheorem{remark}{Remark}
\begin{document}
\title{A  Newton conditional gradient method for constrained nonlinear systems }

\author{
    Max L.N. Gon\c calves
    \thanks{Institute of Mathematics and Statistics, Federal University of Goias, Campus II- Caixa
    Postal 131, CEP 74001-970, Goi\^ania-GO, Brazil. (E-mails: {\tt
       maxlng@ufg.br} and {\tt jefferson@ufg.br}). The work of these authors was
    supported in part by  CNPq Grant  444134/2014-0, 309370/2014-0, 200852/2014-0  and FAPEG/GO.}
    \and
      Jefferson G. Melo \footnotemark[1]
}
 \maketitle
\begin{abstract}
In this paper, we consider the problem of solving a constrained system of nonlinear equations. 
We propose an algorithm based on a combination of the Newton and conditional gradient methods, and establish its local convergence analysis. Our analysis is set up by using a  majorant condition technique, allowing us to prove in a unified way convergence results for  two large families of nonlinear functions. 
The first one includes functions whose derivative satisfies a H\"{o}lder-like condition, and the second one consists of a substantial subclass of analytic functions. 
Numerical experiments illustrating the applicability of the proposed method are presented, and  comparisons  with some other methods are discussed.

\end{abstract}

\noindent {{\bf Keywords:} constrained nonlinear systems; Newton method; conditional gradient method;  local
convergence.}

\maketitle
\section{Introduction}\label{sec:int}
In this paper, we consider the problem of finding a solution of the constrained  system of nonlinear equations
\begin{equation}\label{eq:p}
F(x)=0, \qquad x\in C,
\end{equation}
where  $F:\Omega \to \mathbb{R}^n $ is a continuously differentiable nonlinear function and $\Omega \subset \mathbb{R}^n$ is an open set  containing the nonempty  convex compact set $C$.  
 This problem  appears in many application areas such as engineering,  chemistry and  economy.  The constraint set may naturally arise in order to exclude  solutions of the model with no physical meaning, or it may be considered artificially due to some knowledge about the problem itself (see, for example,   \cite{morini1,Mangasarian1,sandra} and references therein). 
Different approaches to solve \eqref{eq:p} have been proposed in the literature. Many of them are related to their unconstrained counterpart having as focus the Newton Methods whenever applicable. Strategies based on different techniques such as trust region,  active set and gradient methods have also been used; see, for instance, \cite{morini1,morini2,Mangasarian1,Kan2,sandra,marinez2,Porcelli,Zhang1,Zhu2005343}.

  In this paper, we propose a Newton conditional gradient (Newton-CondG) method  for solving~\eqref{eq:p}, which consists of  a Newton step followed by a procedure related to the  conditional gradient (CondG) method. 
 The procedure plays the role of  getting the Newton iterate back to the constraint set
 in such a way that  the fast convergence of Newton method is maintained.
The  CondG  method (also known as Frank-Wolfe method \cite{Dunn1980,NAV:NAV3800030109}) requires at each iteration the minimization of a linear functional over the feasible constraint set.
This requirement is considered relatively simple and can be  fulfilled efficiently for many problems. Moreover, depending on the problem and the structure of the constraint set, linear optimization oracles may provide solutions with specific characteristics leading to important properties such as sparsity and low-rank,  see, e.g., \cite{Freund2014,ICML2013_jaggi13} for a discussion on this subject.
Due to these facts  and its simplicity,  CondG method  have recently received a lot of attention from a theoretical and computational point of view, see for instances
   \cite{Freund2014,nemiro2014,ICML2013_jaggi13,lan2015,Ya-2015,Ronny2013} and references therein. An interesting approach is to combine variants of  CondG method with  some superior well designed algorithms;
   for instance,  augmented Lagrangian   and accelerated gradient methods, see \cite{lan2015,Ya-2015}.
   In this sense, our combination of  CondG and Newton methods seems to be  promising.

 We present a local convergence analysis of Newton-CondG method. 
More specifically, we  provide an estimate of the  convergence radius,
for which the well-definedness and the convergence of the method are ensured. 
Furthermore, results on  convergence rates of the method are also established.
 Our analysis is done via the concept of majorant condition which, 
 besides improving the convergence theory, allows  to study  Newton type methods in a unified way, see   \cite{F10,MAX1,MAX3,FS06}  and references therein.
 Thus, our analysis covers  two large families of nonlinear functions,  namely,    one  satisfying a H\"{o}lder-like condition,
which includes functions with Lipschitz derivative, and  another one  satisfying a Smale condition, which includes a substantial class of analytic functions. 
 Finally, we also present some numerical experiments illustrating the applicability of our method and discuss its behavior compared with other methods.

This paper is organized as follows.  In Section \ref{sec:PMF},  we study a certain scalar sequence generated by a Newton-type method. The Newton-CondG method and its convergence analysis are discussed in  
 Section \ref{sec:condGmet}. 
Section \ref{sec:HolderSmale} specializes our main convergence result for functions satisfying  H\"{o}lder-like  and Smale conditions. 
In Section \ref{NunEx}, we present some numerical experiments illustrating the applicability of the proposed method.

\vspace{2mm}
\noindent { \bf Notations and basic assumptions:}  
Throughout this paper, we assume that $F:\Omega \to \mathbb{R}^n $ is a continuously differentiable nonlinear function
where  $\Omega \subset \mathbb{R}^n$ is an open set containing nonempty  convex compact set $C$.
The  Jacobian matrix  of $F$ at $x\in \Omega$ is denoted by $F'(x)$. 
 We also assume that there exists  $x_*\in C$  such that $F(x_*)=0$ and $F'(x_*)$ is nonsingular.
 Let  the inner product and its associated norm in $\mathbb{R}^n$ be denoted by $\langle\cdot,\cdot\rangle$ and $\| \cdot \|$, respectively. The open ball
centered at $a \in \mathbb{R}^n$ and radius $\delta>0$ is denoted  by $B(a,\delta)$.
For a given linear operator $T:\mathbb{R}^n\to \mathbb{R}^n$, we also use  $\| \cdot \|$ to denote its norm, which is defined by  
$$
\|T\|:=\sup\{ \|Tx\|,\;\|x\|\leq 1\}.
$$
\section{Preliminaries results} \label{sec:PMF}

 Our goal in this section is to  study the behavior of a scalar sequence generated by a Newton-type method applied to solve 
$$f(t)=0,$$  
where $f:[0,\; R)\to \mathbb{R}$ is a continuously differentiable function  such that
\begin{itemize}
  \item[{\bf h1.}]  $f(0)=0$ and $f'(0)=-1$;
  \item[{\bf  h2.}]  $f'$ is  strictly increasing.
\end{itemize}  

Although {\bf h1} implies  that $t_*=0$ is a solution of the above equation, 
the convergence properties of this scalar sequence will be directly associated to 
the sequence generated by Newton-CondG method.  
First, consider the scalar  $\nu$   given by
\begin{equation}\label{nu}
 \nu:=\sup\{t\in [0, R): f'(t)<0\}.
\end{equation}
 Since $f'$ is continuous  and  $f'(0)=-1$, it follows  that $\nu>0$. Moreover,   {\bf h2} implies that $f'(t)<0$ for all $t\in [0,\nu)$. Hence, the following Newton iteration map for  $f$ is well defined:
\begin{equation} \label{eq:def.nf}
  \begin{array}{rcl}
  n_f:[0,\, \nu)&\to& \mathbb{R}\\
    t&\mapsto& t-f(t)/f'(t).
  \end{array}
\end{equation}
Let us also consider  the scalars $\lambda$ and $\rho$ such that
 \begin{equation}\label{rho}
\lambda\in [0,1),\quad  \rho:=\sup\left\{\delta \in(0, \nu):(1+\lambda)\frac{|n_f(t)|}{t}+\lambda<1, \; t\in(0, \delta)\right\}. 
\end{equation}
We now present  some properties of the Newton iteration map $n_f$   and show that $\rho> 0$.

\begin{proposition}  \label{pr:incr1}
The following statements hold:
\begin{itemize}
  \item[{ a)}]  $n_f(t)<0$ for all $t \in\,  (0,\,\nu)$;
  \item[{  b)}]  $\lim_{t\downarrow 0}|n_f(t)|/t=0;$
    \item[{  c)}] the scalar $\rho$ is positive and 
\begin{equation}\label{eq:001}
0<(1+\lambda)|n_f(t)|+t\lambda<t,\qquad \forall t\in (0, \, \rho).
\end{equation}
    \end{itemize}  
\end{proposition}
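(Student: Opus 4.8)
The plan is to establish (a) and (b) by direct computation, using only the fundamental theorem of calculus together with the monotonicity hypothesis \textbf{h2}, and then to obtain (c) as an essentially immediate consequence of (b) and the definition of $\rho$ as a supremum. For (a), I would start from $f(t)=\int_0^t f'(s)\,ds$ (valid since $f(0)=0$ by \textbf{h1}). Because $f'$ is strictly increasing, for $0\le s<t<\nu$ we have $f'(s)<f'(t)$, and integrating gives $f(t)<t\,f'(t)$, i.e. $t\,f'(t)-f(t)>0$. Since $f'(t)<0$ on $(0,\nu)$ (by the definition of $\nu$ and \textbf{h2}), dividing by $f'(t)$ reverses the inequality and yields $n_f(t)=\big(t\,f'(t)-f(t)\big)/f'(t)<0$, which is (a). In particular $|n_f(t)|=-n_f(t)>0$ on $(0,\nu)$, a fact needed later.

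For (b), using (a) I would write
\[
\frac{|n_f(t)|}{t}=\frac{-n_f(t)}{t}=\frac{f(t)-t\,f'(t)}{t\,f'(t)}=\frac{f(t)/t}{f'(t)}-1 .
\]
As $t\downarrow 0$, $f(t)/t\to f'(0)=-1$ (this is just the definition of the derivative at $0$, again using $f(0)=0$), while $f'(t)\to f'(0)=-1$ by continuity of $f'$; hence the quotient tends to $(-1)/(-1)=1$ and therefore $|n_f(t)|/t\to 0$, which is (b).

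For (c), I note from (b) that the scalar function $t\mapsto (1+\lambda)\,|n_f(t)|/t+\lambda$ tends to $\lambda<1$ as $t\downarrow 0$, so it stays strictly below $1$ on some interval $(0,\bar\delta)$ with $\bar\delta\in(0,\nu)$. This shows the set defining $\rho$ in \eqref{rho} is nonempty, hence $\rho\ge\bar\delta>0$. To get \eqref{eq:001}, fix $t\in(0,\rho)$; since $t<\rho=\sup(\cdot)$ there is a $\delta$ belonging to that set with $\delta>t$, so $(1+\lambda)\,|n_f(t)|/t+\lambda<1$, and multiplying by $t>0$ gives $(1+\lambda)|n_f(t)|+t\lambda<t$. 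The lower bound $0<(1+\lambda)|n_f(t)|+t\lambda$ is immediate from $|n_f(t)|>0$ (by (a)) together with $1+\lambda>0$ and $t,\lambda\ge 0$, completing (c).

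I do not anticipate a genuine obstacle: the argument is short and each step is elementary. The only point that requires a little care is the handling of the supremum in the definition of $\rho$ — passing from the limit statement to a uniform bound on a whole subinterval, and then extracting the pointwise inequality at an arbitrary $t<\rho$ — which works smoothly once one observes that the defining property is hereditary, i.e. if it holds on $(0,\delta)$ then it holds on $(0,\delta')$ for every $0<\delta'<\delta$.
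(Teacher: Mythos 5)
Your proof is correct and follows essentially the same route as the paper: both derive $f(t)<t\,f'(t)$ from \textbf{h2} (you via the fundamental theorem of calculus, the paper via the strict-convexity inequality) to get (a), both compute the same quotient identity and pass to the limit using $f'(0)=-1$ for (b), and both obtain $\rho>0$ and \eqref{eq:001} from (a), (b) and the supremum defining $\rho$. Your extra care in extracting the pointwise inequality at an arbitrary $t\in(0,\rho)$ from the supremum is a correct (and slightly more explicit) rendering of what the paper calls ``trivially holds.''
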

\begin{proof}
(a) From condition {\bf h2} we see that  $f'$ is strictly increasing in $[0,R)$, in particular,  $f$ is strictly convex. Hence,  since $\nu\leq R$ (see \eqref{nu}),  we obtain $f(0)>f(t)+f'(t)(0-t),$ for any $t\in  (0,\, \nu)$ 
which combined with $f(0)=0$ and  $f'(t)<0$ for any $t\in (0, \nu)$, proves  item (a).

(b) In view of item (a) and the fact that $f(0)=0$, we obtain
\begin{equation} \label{eq:rho}
\frac{|n_f(t)|}{t}= \frac{1}{t}\left(\frac{f(t)}{f'(t)}-t\right)=\frac{1}{f'(t)} \frac{f(t)-f(0)}{t-0}-1, \qquad \forall t\in (0,\,\nu).
\end{equation}
As  $f'(0)\neq 0$, item (b) follows by taking limit in~\eqref{eq:rho}, as $t$  $\downarrow 0$.

(c) Since $0\leq\lambda<1$, using items~(a) and (b),
 we conclude that there exists 
$\delta>0$ such that
$$
0<\frac{|n_f(t)|}{t}<\frac{1-\lambda}{1+\lambda},
 \qquad   \forall t\in \, (0, \delta),
$$
or, equivalently,
$$
0<(1+\lambda)\frac{|n_f(t)|}{t}+\lambda<1,\qquad  \forall t\in\,  (0, \delta).
$$
Therefore,   $\rho$  is positive and \eqref{eq:001} trivially holds.
\end{proof}
Let  $t_0\in(0,\rho)$ and $\{\theta_k\}\subset [0,+\infty)$  be given, and  define the scalar sequence $\{t_k\}$ by 
\begin{equation} \label{eq:tknk}
t_{k+1}=(1+\sqrt{2\theta_k})|n_f(t_k)|+\sqrt{2\theta_k} t_k, \qquad \forall k\geq 0.
\end{equation}

\begin{corollary} \label{cr:kanttk}
Assume that $\{\theta_k\} \subset [0,\lambda^2/2]$. Then
 the sequence $\{t_k\}$ is well defined,  strictly decreasing and  converges to $0$. Moreover, 
$ \limsup_{k\to \infty}t_{k+1}/t_k=\sqrt{2\tilde \theta},$ where $ \tilde \theta=\limsup_{k\to \infty}{\theta_k}$.
\end{corollary}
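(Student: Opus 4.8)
The plan is to first show, by induction on $k$, that the entire sequence $\{t_k\}$ stays in $(0,\rho)$ and is strictly decreasing; this also yields well-definedness, since $(0,\rho)\subset(0,\nu)$ and $n_f$ is defined (and continuous) on $(0,\nu)$. The base case $t_0\in(0,\rho)$ is given. For the inductive step I would use the hypothesis $\theta_k\le\lambda^2/2$, which gives $\sqrt{2\theta_k}\le\lambda<1$, so that from \eqref{eq:tknk} together with Proposition~\ref{pr:incr1}(a) and (c),
$$0<|n_f(t_k)|\le t_{k+1}=(1+\sqrt{2\theta_k})|n_f(t_k)|+\sqrt{2\theta_k}\,t_k\le (1+\lambda)|n_f(t_k)|+\lambda t_k<t_k<\rho,$$
whence $t_{k+1}\in(0,\rho)$ and $t_{k+1}<t_k$.

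Being strictly decreasing and bounded below by $0$, the sequence $\{t_k\}$ converges to some $\bar t\in[0,\rho)$ (indeed $\bar t<t_0<\rho$). To identify $\bar t$ I would argue by contradiction. If $\bar t>0$, then $\bar t\in(0,\rho)\subset(0,\nu)$, so $n_f$ is continuous at $\bar t$; passing to the limit in $t_{k+1}\le(1+\lambda)|n_f(t_k)|+\lambda t_k$ gives $(1-\lambda)\bar t\le(1+\lambda)|n_f(\bar t)|$, i.e. $(1+\lambda)|n_f(\bar t)|/\bar t+\lambda\ge 1$, which contradicts Proposition~\ref{pr:incr1}(c) (equivalently, the definition \eqref{rho} of $\rho$) applied at $\bar t\in(0,\rho)$. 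Hence $\bar t=0$.

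For the asymptotic ratio, I would write $t_{k+1}/t_k=(1+\sqrt{2\theta_k})\,|n_f(t_k)|/t_k+\sqrt{2\theta_k}$. Since $t_k\downarrow 0$, Proposition~\ref{pr:incr1}(b) gives $|n_f(t_k)|/t_k\to 0$, and since $\{\sqrt{2\theta_k}\}$ is bounded (by $\lambda$), the first summand tends to $0$. Therefore $\limsup_{k\to\infty}t_{k+1}/t_k=\limsup_{k\to\infty}\sqrt{2\theta_k}=\sqrt{2\,\limsup_{k\to\infty}\theta_k}=\sqrt{2\tilde\theta}$, using that $s\mapsto\sqrt{2s}$ is continuous and nondecreasing.

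The only delicate point I anticipate is the step $\bar t=0$: one must verify that $\bar t$ genuinely lies in $(0,\rho)$ — which it does, because $\{t_k\}$ is strictly decreasing with $t_0<\rho$ — so that the strict inequality from Proposition~\ref{pr:incr1}(c) is available at $\bar t$ and produces the contradiction. The induction and the final limit computation are routine.
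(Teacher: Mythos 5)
Your proposal is correct and follows essentially the same route as the paper: induction with the chain $0<t_{k+1}\le(1+\lambda)|n_f(t_k)|+\lambda t_k<t_k$ from Proposition~\ref{pr:incr1} and \eqref{eq:001}, identification of the limit via continuity of $n_f$ and \eqref{eq:001} (the paper phrases this as $t_*\le(1+\lambda)|n_f(t_*)|+\lambda t_*$ forcing $t_*=0$ rather than an explicit contradiction, but it is the same argument), and the ratio computation via Proposition~\ref{pr:incr1}(b). No gaps.
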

\begin{proof}
First of all,  since  $(0,\rho)\subset \mbox{dom}(n_f)$,  in order to show the well definedness of  $\{t_k\}$ is sufficient to prove  that  $t_k\in (0, \rho)$ for all $k$. Let us prove this latter  inclusion by induction on $k$.
As $t_0\in(0,\rho)$, the statement trivially  holds for $k=0$. Now, 
assume that it holds for some $k\geq0$.
Hence, $t_k \in \mbox{dom}(n_f)$ and  it follows from Proposition~ \ref{pr:incr1} (a) and \eqref{eq:tknk}   that
 \begin{equation}\label{aj34}
 0< t_{k+1}= (1+{\sqrt{2\theta_k}})|n_f(t_k)|+{\sqrt{2\theta_k}}t_k\leq (1+\lambda)|n_f(t_k)|+\lambda t_k< t_k,
 \end{equation}
where the second and third inequalities are due to $0\leq\sqrt{2\theta_k}\leq \lambda$ and  \eqref{eq:001}, respectively. It follows from \eqref{aj34} and the induction assumption that $t_{k+1}\in (0,\rho)$, concluding the induction proof. Thus,  $\{t_k\}$ is well defined 
and  \eqref{aj34} also implies that
it is strictly decreasing.
As a consequence, we have 
 $\{t_k \}$ converges to some $t_* \in [0,\rho)$. Thus,  since $n_f(\cdot)$ is continuous, taking 
 limit superior in \eqref{aj34} as $k \to \infty$, we obtain, in particular,  
 $$
  t_{*}\leq (1+\lambda)|n_f(t_*)|+\lambda t_*.
 $$
Therefore,  \eqref{eq:001}  implies that $t_*=0$. Now, using 
 \eqref{eq:tknk},  $\lim_{k\to \infty}t_{k}=0$  and Proposition~\ref{pr:incr1}~(b), we have
   $$
\displaystyle   \limsup_{k\to \infty}\frac{t_{k+1}}{t_k}=\limsup_{k\to \infty} \,\left((1+\sqrt{2\theta_k})\frac{|n_f(t_k)|}{t_k}+\sqrt{2\theta_k}\right)=\sqrt{2\tilde \theta},
   $$
concluding the proof of the corollary.
\end{proof}

\section{The  method and its convergence analysis}\label{sec:condGmet}

In this section, we  propose  a Newton conditional gradient method  to solve \eqref{eq:p} and 
discuss its  local convergence results. 

\subsection{The Newton-CondG  method}\label{sec:method}
In this subsection, we   present  a method for solving \eqref{eq:p} which consists of  a Newton step followed by a procedure related to an inexact  conditional gradient method. This procedure is used in order to retrieve the Newton iterate back to the constraint set $C$ in such a way that the fast convergence of the sequence generated by the method is ensured. We assume the existence of a linear optimization oracle (LO oracle) capable of minimizing linear functions over $C$. 
The Newton conditional gradient method is formally described as follows.

\noindent
\\
\hrule
\noindent
\\
{\bf  Newton-CondG method\\} \label{CGM}
\hrule
\begin{description}

\item[ Step 0.] Let $x_0\in C$ and   $\{\theta_j\}\subset[0,\infty)$   be given and set $k=0$.
\item[ Step 1.] If $F(x_k)=0$, then {\bf stop}; otherwise, compute $s_k \in \mathbb{R}^n$ and $y_k\in \mathbb{R}^n$ such that
\begin{equation}\label{aa0}
F'(x_k) s_k=-F(x_k),  \quad y_{k}=x_k+s_k.
\end{equation}
\item[ Step 2.]  Given $\theta_k\geq 0$, use CondG procedure  to obtain $x_{k+1}\in C$ 
as
\begin{equation}\label{aa1}
x_{k+1}=\mbox{CondG}(y_{k},x_k,\theta_k\|s_k\|^2).
\end{equation}
\item[ Step 3.]  Set $k\gets k+1$, and go to step~1.
\end{description}
\noindent
{\bf end}\\
\noindent
\\
{ {\bf CondG procedure} $z=\mbox{CondG}(y,x,\varepsilon)$} \label{CGM}
\begin{description}
\item[ P0.]   Set $z_1=x$ and  $t=1$.
\item[ P1.] Use the LO oracle to compute an optimal solution $u_t $ of  
$$
g_{t}^*=\min_{u \in  C}\{\langle  z_t-y,u-z_t \rangle\}.
$$
\item[ P2.] If $ g^*_{t}\geq -\varepsilon $,  set $z=z_t$ and {\bf stop} the procedure; otherwise, compute $\alpha_t \in \, (0,1]$ and $z_{t+1}$ as
 
$$
{\alpha}_t: =\min\left\{1, \frac{-g^*_{t}}{\|u_t-z_t\|^2}  \right\}, \qquad  z_{t+1}=z_t+ \alpha_t(u_t-z_t).
$$
\item[ P3.] Set $t\gets t+1$, and go to {\bf P1}. 
\end{description}
{\bf end procedure}\\
\hrule
\noindent
\\
  Some remarks regarding Newton-CondG method are in order. First, in step~1, we check whether the current iterate $x_k$ is  a solution, and if not, we  execute a Newton step.  
   Second, since the iterate $y_k$ obtained after a Newton step  may be infeasible to the constraint set $C$, we apply an inexact conditional gradient method in order to get the new iterate $x_{k+1}$ in $C$. As mentioned before,   this method requires an oracle which is assumed to be able to minimize linear functions over the constraint set. It is clear that this may be done efficiently for a wide class of sets, since many methods for minimizing linear functions are well established in the literature. Third,  if CondG procedure computes  $g^*_t\geq -\varepsilon $ then it stops and out put  $z_t \in C$. Hence, if the procedure  continues, we have $g^*_t<-\varepsilon \leq0$ which implies that the stepize $\alpha_t$ is well defined and belongs to  $ (0,1]$.

\subsection{ Convergence of  Newton-CondG method} \label{sec:proof}

In this subsection,  we discuss the convergence behavior of  Newton-CondG method.  First, we present
the concept of majorant functions and some of their properties. Then,  some properties of CondG procedure are studied. Finally, we  state and prove our main result.
 Basically, our main theorem specifies a convergence radius of the method and  analyze its convergence  results.
Moreover, it also present the relationship between the Newton-CondG sequence $\{x_k\}$ and sequence $\{t_k\}$ defined in~\eqref{eq:tknk}, which will  be associated to our majorant function.

We start by defining, for any given $R\in(0,+\infty]$,  the scalar $\kappa$ as
 \begin{equation}\label{kappa}
\kappa:=\kappa(\Omega,R)=\sup \left\{ t\in [0, R): B(x_*, t)\subset \Omega \right\}.
\end{equation} 
 
In order to analyze  the convergence properties of Newton-CondG method, we consider  the concept of majorant functions which has the advantage of  presenting, in a unified way, convergence result for different classes of nonlinear functions; more details about the majorant condition can be found in \cite{F10,MAX1,MAX3,FS06}.

\begin{definition}\label{deff:maj_funct}
Let $R\in(0,+\infty]$ be given and consider $\kappa$ as in \eqref{kappa}.
A function  $f:[0,\; R)\to \mathbb{R}$ continuously differentiable is said to be a majorant function for the function $F$ on 
$B(x_*,\kappa)$ if and only if
  \begin{equation}\label{Hyp:MH}
\left\|F'(x_*)^{-1}\left[F'(x)-F'(x_*+\tau(x-x_*))\right]\right\| \leq    f'\left(\|x-x_*\|\right)-f'\left(\tau\|x-x_*\|\right),
  \end{equation}
  for all $\tau \in [0,1]$ and $x\in B(x_*, \kappa)$, and  conditions   {\bf h1} and 
  {\bf h2}   are satisfied.

\end{definition}

To illustrate  Definition~\ref{deff:maj_funct},
let $\mathcal{L}$ be the class  of continuously differentiable functions $G:\mathbb{R}^n \to \mathbb{R}^n$ 
such that $G '(x_*)$ is non-singular and the following H\"{o}lder type condition is satisfied  
$$
 \left\|G'(x_*)^{-1}(G'(x)-G'(x_*+\tau(x-x_*)))\right\|\leq  K(1-\tau^p) \|x-x_*\|^p, \;   x\in \mathbb{R}^n, \;\tau \in [0,1],
$$
for some $K>0$ and $ 0< p \leq 1$. It is easy to see that the function    \mbox{$f:[0, \infty) \to \mathbb {R}$}    given by  $f(t)=(K/(p+1))t^{p+1}-t$ is a majorant function  for any  $G \in \mathcal{L}$. It is worth pointing out  that the class $\mathcal{L}$ includes, in particular, functions $G$ with Lipschitz continuous derivative such that $G '(x_*)$ is non-singular. Section \ref{sec:HolderSmale} contains more details on this class of functions and also another one for analytic functions satisfying a Smale condition.

Before presenting some properties of majorant functions, let  $R\in(0,+\infty]$ be given and  $\kappa$ as defined in \eqref{kappa}, and consider the following condition:

\begin{itemize}
 \item[{\bf  A1.}] the function $F$ has a majorant function    $f:[0,\; R)\to \mathbb{R}$  on $B(x_*,\kappa)$.
\end{itemize}

Let us now present a result which is fundamental for the convergence analysis of Newton-CondG method. More precisely, it 
highlights the relationship between the nonlinear function $F$ and its majorant function $f$.

\begin{lemma} \label{lemgeral:relateF_nf} 
Assume  that {\bf A1} holds, and  let  $x \in  B\left(x^*,\min\{\kappa,\nu\}\right)$,
 where $\nu$ is defined in \eqref{nu}. Then the function $F'(x)$ is invertible  and the following estimates hold:
\begin{itemize}
\item [a)]  $\|F'(x)^{-1}F'(x_*)\|\leqslant  1/|f'(\| x-x_*\|)|;$
\item [b)] $\|F'(x)^{-1}F(x)\|\leq f(\|x-x_*\|)/ f' (\|x-x_*\|);$ 
\item [c)] $\|F'(x_*)^{-1}\left[ F(x_*)-F(x)-F'(x)(x_*-x)\right]\|\leq  f' (\|x-x_*\|)\|x-x_*\|-f(\|x-x_*\|)$.
\end{itemize}
\end{lemma}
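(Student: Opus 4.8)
The overall strategy is the standard Newton–Kantorovich/Banach-lemma technique adapted to the majorant setting: control $F'(x)$ by comparing it to $F'(x_*)$ via the majorant inequality \eqref{Hyp:MH}, then derive the three estimates from the fundamental theorem of calculus applied to $\tau\mapsto F'(x_*+\tau(x-x_*))$.

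\textbf{Step 1: Invertibility of $F'(x)$.} Write $r:=\|x-x_*\|$ and note $r<\min\{\kappa,\nu\}$, so $x\in B(x_*,\kappa)\subset\Omega$ and $f'(r)<0$ by the definition of $\nu$ together with {\bf h2}. Applying \eqref{Hyp:MH} with $\tau=0$ gives $\|F'(x_*)^{-1}[F'(x)-F'(x_*)]\|\le f'(r)-f'(0)=f'(r)+1<1$, since $-1\le f'(r)<0$ forces $f'(r)+1\in[0,1)$. By the Banach lemma, $F'(x_*)^{-1}F'(x)=I+(F'(x_*)^{-1}F'(x)-I)$ is invertible, hence so is $F'(x)$, and
$$
\|F'(x)^{-1}F'(x_*)\|\le\frac{1}{1-(f'(r)+1)}=\frac{1}{-f'(r)}=\frac{1}{|f'(r)|},
$$
which is item (a).

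\textbf{Step 2: Items (b) and (c).} For (c), write the integral remainder
$$
F(x_*)-F(x)-F'(x)(x_*-x)=\int_0^1\bigl[F'(x_*+\tau(x-x_*))-F'(x)\bigr](x_*-x)\,d\tau,
$$
multiply by $F'(x_*)^{-1}$, take norms, and bound the integrand using \eqref{Hyp:MH} (with the roles arranged so that $f'(r)-f'(\tau r)$ appears); integrating $\int_0^1[f'(r)-f'(\tau r)]\,d\tau\cdot r = f'(r)r-(f(r)-f(0))=f'(r)r-f(r)$ gives (c) after using $f(0)=0$. For (b), use $F(x)=F(x)-F(x_*)$ together with the decomposition $F'(x)^{-1}F(x)=F'(x)^{-1}F'(x_*)\cdot F'(x_*)^{-1}F(x)$; alternatively, and more cleanly, write $F'(x)^{-1}F(x)=(x-x_*)+F'(x)^{-1}[F(x_*)-F(x)-F'(x)(x_*-x)]$, then bound the second term by combining item (a) with item (c):
$$
\|F'(x)^{-1}F(x)\|\le r+\frac{f'(r)r-f(r)}{|f'(r)|}=r+\frac{f'(r)r-f(r)}{-f'(r)}=\frac{-f(r)}{-f'(r)}=\frac{f(r)}{f'(r)},
$$
using $f'(r)<0$; note $f(r)\le 0$ here by convexity of $f$ (from {\bf h2}) and $f(0)=f'(0)\cdot 0$... more precisely $f(r)<0$ for $r\in(0,\nu)$ as in the proof of Proposition~\ref{pr:incr1}(a), so both sides are nonnegative.

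\textbf{Main obstacle.} The only delicate point is bookkeeping the signs: $f'(r)$ and $f(r)$ are negative on the relevant range, so the ratios $f(r)/f'(r)$ and $f'(r)r-f(r)$ are genuinely nonnegative quantities, and one must be careful to match the direction of the majorant inequality \eqref{Hyp:MH} (which is stated with $f'(\|x-x_*\|)-f'(\tau\|x-x_*\|)\ge 0$) when estimating $\|F'(x_*+\tau(x-x_*))-F'(x)\|$ inside the integral — this requires writing the difference as $F'(x)-F'(x_*+\tau(x-x_*))$ and invoking \eqref{Hyp:MH} directly. No real analytic difficulty beyond that; the argument is essentially the classical one once the scalar function $f$ is in place.
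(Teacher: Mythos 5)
Your proof is correct and follows essentially the same route as the paper, which simply refers to Lemmas 10--12 of \cite{MAX1}: invertibility and item (a) via the Banach perturbation lemma applied to $F'(x_*)^{-1}F'(x)=I+F'(x_*)^{-1}[F'(x)-F'(x_*)]$ using \eqref{Hyp:MH} with $\tau=0$, item (c) via the integral form of the linearization error bounded by \eqref{Hyp:MH}, and item (b) by combining (a) and (c). One cosmetic remark: since $F(x_*)=0$, the identity in your Step 2 should read $F'(x)^{-1}F(x)=(x-x_*)-F'(x)^{-1}\left[F(x_*)-F(x)-F'(x)(x_*-x)\right]$ (minus sign on the remainder term), but as you only take norms afterwards this sign slip does not affect the estimate.
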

\begin{proof}
The proof follows the same pattern as the proofs of Lemmas 10, 11 and 12 in~\cite{MAX1}.
\end{proof}

Next, we present two results which contain some basic properties of  CondG procedure.


\begin{lemma}  \label{pr:condi}
For any $y,\tilde y\in \mathbb{R}^n$, $x, \tilde x \in C$  and $\mu \geq 0$, we have
$$
\| \mbox{CondG}(y,x,\mu) - \mbox{CondG}(\tilde y,\tilde x,0)\|\leq  \|y-\tilde y\|+ \sqrt{2\mu}. 
$$
\end{lemma}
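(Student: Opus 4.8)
The plan is to read each call of the procedure through the variational inequality its output satisfies, and then simply add those inequalities. Write $z:=\mbox{CondG}(y,x,\mu)$ and $\tilde z:=\mbox{CondG}(\tilde y,\tilde x,0)$. First I would note that both outputs belong to $C$: each procedure starts at $z_1=x\in C$ (resp.\ $\tilde x\in C$) and every update $z_{t+1}=z_t+\alpha_t(u_t-z_t)$ with $\alpha_t\in(0,1]$ and $u_t\in C$ is a convex combination of points of $C$, so it stays in $C$ by convexity.

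Next I would extract the stopping inequalities from step {\bf P2}. When the first procedure returns $z$, the test $g^*_t\ge-\varepsilon$ with $\varepsilon=\mu$ means $\min_{u\in C}\langle z-y,u-z\rangle\ge-\mu$; taking $u=\tilde z\in C$ gives
$$\langle z-y,\,\tilde z-z\rangle\ge-\mu.$$
Similarly, the call with tolerance $0$ returns $\tilde z$ with $\langle\tilde z-\tilde y,\,u-\tilde z\rangle\ge0$ for all $u\in C$ — equivalently $\tilde z=P_C(\tilde y)$, the exact Euclidean projection of $\tilde y$ onto $C$ — so with $u=z$,
$$\langle\tilde z-\tilde y,\,z-\tilde z\rangle\ge0.$$
Adding these two displays and using $\langle z-y,\tilde z-z\rangle=-\langle z-y,z-\tilde z\rangle$, I get $-\|z-\tilde z\|^2+\langle y-\tilde y,\,z-\tilde z\rangle\ge-\mu$, that is,
$$\|z-\tilde z\|^2\le\mu+\langle y-\tilde y,\,z-\tilde z\rangle\le\mu+\|y-\tilde y\|\,\|z-\tilde z\|$$
by Cauchy--Schwarz. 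From here the conclusion is two lines of elementary algebra: Young's inequality $\|y-\tilde y\|\,\|z-\tilde z\|\le\tfrac12\|y-\tilde y\|^2+\tfrac12\|z-\tilde z\|^2$ yields $\|z-\tilde z\|^2\le2\mu+\|y-\tilde y\|^2$, and then $\sqrt{a+b}\le\sqrt a+\sqrt b$ gives $\|z-\tilde z\|\le\|y-\tilde y\|+\sqrt{2\mu}$, which is the claim.

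The only point requiring a word of care — rather than a real obstacle — is the status of $\mbox{CondG}(\tilde y,\tilde x,0)$: with zero tolerance the Frank--Wolfe loop need not halt in finitely many steps, so this should be understood as the exact projection $P_C(\tilde y)$ it approaches, whose defining variational inequality is precisely what the argument uses; equivalently, the estimate holds verbatim on any instance in which the procedure does terminate. Everything else is routine convexity plus the short computation above.
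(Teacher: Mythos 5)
Your proposal is correct and follows essentially the same route as the paper: both rest on the two stopping (variational) inequalities satisfied by the outputs of the CondG calls, combined by elementary algebra (the paper expands $\|y-\tilde y\|^2$ and drops a nonnegative term, while you add the inequalities and use Cauchy--Schwarz plus Young, reaching the same bound $\|z-\tilde z\|^2\le\|y-\tilde y\|^2+2\mu$). Your side remark on the zero-tolerance call is a reasonable reading, and in the paper's actual use the call $\mbox{CondG}(x_*,x_*,0)$ terminates at once, so no issue arises.
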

\begin{proof}
Let us denote $z=\mbox{CondG}(y,x,\mu)$ and $\tilde z=\mbox{CondG}(\tilde y,\tilde x,0)$.
It follows from CondG procedure  that  $z,\tilde z \in C$ and 
\begin{equation}\label{eq:deg}
\langle z-y,\tilde z-z\rangle\geq -\mu,\qquad \langle \tilde z-\tilde y,z-\tilde z\rangle\geq 0.
\end{equation}
On the other hand,  after some simple algebraic manipulations we have
$$\|y-\tilde y\|^2=\|z-\tilde z\|^2+\|(y-z)-(\tilde y-\tilde z)\|^2+2\langle y-z-(\tilde y-\tilde z),z-\tilde z\rangle,$$
which implies that
$$\|z-\tilde z\|^2\leq \|y-\tilde y\|^2+2\langle z-y,z-\tilde z\rangle+2\langle \tilde y-\tilde z ,z-\tilde z \rangle.$$
The last inequality together with \eqref{eq:deg} yields 
$$\|z-\tilde z\|^2\leq \|y-\tilde y\|^2+2\mu,$$
and then
$$\|z-\tilde z\|\leq \|y-\tilde y\|+\sqrt{2\mu},$$
 which combined with the definitions of $z$ and  $\tilde z$ proves the lemma. 
 \end{proof}
The following scalar will be used in the convergence analysis of Newton-CondG method:
\begin{equation}\label{r}
 r:=\min\{\rho,\kappa\},
\end{equation}
where $\rho$ and $\kappa$ are defined in~\eqref{rho} and \eqref{kappa}, respectively.
Since Lemma \ref{lemgeral:relateF_nf} implies that  $F'(x)$ is invertible for any $x\in B(x_*, r)$, we see that the following Newton iteration map is well defined:
\begin{equation} \label{NF}
  \begin{array}{rcl}
  N_{F}:B(x_*, r) &\to& \mathbb{R}^n\\
    x&\mapsto& x-S_F(x),
  \end{array} 
\end{equation}
where
\begin{equation} \label{SF}
S_F(x):=F'(x)^{-1}F(x).
\end{equation}
\begin{lemma} \label{l:wdef}
Assume that {\bf A1} holds, and let  $x \in C\cap B(x^*,r)$ and $\theta\geq 0$. Then  there holds
\begin{equation}\label{o698}
  \|\mbox{CondG}(N_F(x),x,\theta\|S_{F}(x)\|^2) -x_*\| \leq(1+\sqrt{2\theta})|n_f(\|x-x_*\|)|+\sqrt{2\theta}\|x-x_*\|,
\end{equation}
 where $n_f$ is defined in \eqref{eq:def.nf}. 
As a consequence, letting $\lambda$ as in \eqref{rho},  if  $\sqrt{2\theta}\leq \lambda$, then
\begin{equation}\label{eq:contractionCondG}
\mbox{CondG}(N_F(x),x,\theta\|S_{F}(x)\|^2)\in C\cap B(x^*,r).
\end{equation}
\end{lemma}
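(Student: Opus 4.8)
The plan is to reduce inequality \eqref{o698} to the scalar identity already packaged in Lemma~\ref{pr:condi}, and then to invoke Corollary~\ref{cr:kanttk} (more precisely, the inductive step \eqref{aj34} from its proof) to obtain the membership \eqref{eq:contractionCondG}. First I would observe that $x_* \in C$ is a fixed point of the exact conditional gradient procedure, i.e.\ $x_* = \mbox{CondG}(x_*, x_*, 0)$: indeed, with $y = \tilde y = x = x_*$ the linear optimization subproblem in P1 has optimal value $g_1^* = \min_{u\in C}\langle x_* - x_*, u - x_*\rangle = 0 \geq 0$, so the procedure stops immediately and returns $x_*$. Applying Lemma~\ref{pr:condi} with the data $(y, x, \mu) = (N_F(x), x, \theta\|S_F(x)\|^2)$ and $(\tilde y, \tilde x) = (x_*, x_*)$ then gives
\begin{equation}\label{plan:first}
\|\mbox{CondG}(N_F(x), x, \theta\|S_F(x)\|^2) - x_*\| \leq \|N_F(x) - x_*\| + \sqrt{2\theta}\,\|S_F(x)\|.
\end{equation}

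Next I would estimate the two terms on the right-hand side of \eqref{plan:first} using Lemma~\ref{lemgeral:relateF_nf}, which applies because $x \in B(x_*, r) \subseteq B(x_*, \min\{\kappa,\nu\})$ (since $r = \min\{\rho,\kappa\} \leq \min\{\nu,\kappa\}$ by $\rho \leq \nu$). By part (b) of that lemma, $\|S_F(x)\| = \|F'(x)^{-1}F(x)\| \leq f(\|x-x_*\|)/f'(\|x-x_*\|)$; since $f'$ is negative on $(0,\nu)$ and $f$ is negative on $(0,\nu)$ as well (strict convexity plus $f(0)=0$, $f'(0)=-1$), this equals $|f(\|x-x_*\|)|/|f'(\|x-x_*\|)|$. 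For the Newton term, writing $N_F(x) - x_* = x - x_* - F'(x)^{-1}F(x)$ and using $F(x_*) = 0$,
\[
N_F(x) - x_* = F'(x)^{-1}\bigl[F(x_*) - F(x) - F'(x)(x_* - x)\bigr],
\]
so parts (a) and (c) of Lemma~\ref{lemgeral:relateF_nf} yield $\|N_F(x) - x_*\| \leq \bigl(f'(\|x-x_*\|)\|x-x_*\| - f(\|x-x_*\|)\bigr)/|f'(\|x-x_*\|)|$, which is exactly $|n_f(\|x-x_*\|)|$ by the definition \eqref{eq:def.nf} of $n_f$ and part (a) of Proposition~\ref{pr:incr1}. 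Substituting these two bounds, with $t := \|x-x_*\|$, into \eqref{plan:first} produces
\[
\|\mbox{CondG}(N_F(x),x,\theta\|S_F(x)\|^2) - x_*\| \leq |n_f(t)| + \sqrt{2\theta}\,|n_f(t)| + \sqrt{2\theta}\,\frac{|f(t)|}{|f'(t)|}.
\]
At this point I would note that $|f(t)|/|f'(t)| = t + n_f(t)/1$... more carefully, from \eqref{eq:rho} we have $n_f(t) = f(t)/f'(t) - t$, and since $n_f(t)<0$ this gives $f(t)/f'(t) = t + n_f(t)$, hence $|f(t)|/|f'(t)| = |f(t)/f'(t)| = t - |n_f(t)| \leq t$. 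Using this last estimate in the displayed line gives precisely the right-hand side of \eqref{o698}, proving the first assertion.

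Finally, for \eqref{eq:contractionCondG}, the output lies in $C$ automatically since the CondG procedure always returns a point of $C$. It remains to check it lies in $B(x_*, r)$. Assuming $\sqrt{2\theta} \leq \lambda$ and setting $t = \|x - x_*\| \in (0, r) \subseteq (0,\rho)$ (the case $t=0$ being trivial since then $x = x_*$ is a solution), the just-proved bound \eqref{o698} together with the chain of inequalities in \eqref{aj34} — namely $(1+\sqrt{2\theta})|n_f(t)| + \sqrt{2\theta}\,t \leq (1+\lambda)|n_f(t)| + \lambda t < t$, where the strict inequality is \eqref{eq:001} from Proposition~\ref{pr:incr1}(c) — shows the distance from the output to $x_*$ is strictly less than $t < r$, giving \eqref{eq:contractionCondG}. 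The main obstacle, if any, is purely bookkeeping: one must verify carefully that $r \le \nu$ (so Lemma~\ref{lemgeral:relateF_nf} applies) and that all the quantities $f(t)$, $f'(t)$, $n_f(t)$ carry the signs claimed, so that the absolute values in parts (a)--(c) of Lemma~\ref{lemgeral:relateF_nf} can be manipulated into the $|n_f(t)|$-form of \eqref{eq:tknk}; none of this is deep, but the sign conventions need to be tracked consistently.
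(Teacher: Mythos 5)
Your proposal follows the paper's own route step for step: you reduce \eqref{o698} to Lemma~\ref{pr:condi} applied with $(\tilde y,\tilde x)=(x_*,x_*)$ together with the observation that $\mbox{CondG}(x_*,x_*,0)=x_*$, you bound $\|N_F(x)-x_*\|$ by $|n_f(\|x-x_*\|)|$ via Lemma~\ref{lemgeral:relateF_nf}(a),(c), you bound $\|S_F(x)\|$ via Lemma~\ref{lemgeral:relateF_nf}(b), and you obtain \eqref{eq:contractionCondG} from \eqref{eq:001} plus the feasibility of the CondG output; your checks that $r\le\min\{\kappa,\nu\}$ and that $f$, $f'$, $n_f$ are negative on $(0,\nu)$ are also correct.

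However, the algebra closing the first part contains a sign error, so the step as written rests on a false inequality. From \eqref{eq:def.nf}, $n_f(t)=t-f(t)/f'(t)$, and since $n_f(t)<0$ on $(0,\nu)$ (Proposition~\ref{pr:incr1}(a)) one has $|n_f(t)|=f(t)/f'(t)-t$, i.e.\ $f(t)/f'(t)=t+|n_f(t)|>t$. You instead assert ``$f(t)/f'(t)=t+n_f(t)$, hence $|f(t)|/|f'(t)|=t-|n_f(t)|\le t$'', which reverses the sign: in fact $|f(t)|/|f'(t)|=f(t)/f'(t)$ is strictly larger than $t$, so the estimate you invoke to pass from your intermediate display to \eqref{o698} is false. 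Relatedly, that intermediate three-term display $|n_f(t)|+\sqrt{2\theta}\,|n_f(t)|+\sqrt{2\theta}\,|f(t)|/|f'(t)|$ does not come from substituting your two bounds (that substitution gives $|n_f(t)|+\sqrt{2\theta}\,f(t)/f'(t)$, without the middle term), and with the correct identity it would only yield the weaker bound $(1+2\sqrt{2\theta})|n_f(t)|+\sqrt{2\theta}\,t$. The repair is immediate and shortens the argument: substituting $\|N_F(x)-x_*\|\le|n_f(t)|$ and $\|S_F(x)\|\le f(t)/f'(t)=t+|n_f(t)|$ into $\|N_F(x)-x_*\|+\sqrt{2\theta}\,\|S_F(x)\|$ gives exactly $(1+\sqrt{2\theta})|n_f(t)|+\sqrt{2\theta}\,t$, which is \eqref{o698} and is precisely how the paper concludes; the remainder of your argument for \eqref{eq:contractionCondG} is fine.
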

\begin{proof}
It follows from Lemma \ref{pr:condi} with $y=N_F(x)$,  $\tilde y=N_F(x_*)$,  $\tilde x=x_*$ and 
$\mu=\theta\|S_{F}(x)\|^2$ that
\[
  \| \mbox{CondG}(N_F(x),x,\theta\|S_{F}(x)\|^2)-\mbox{CondG}(N_F(x^*),x^*,0)\|\leq  \|N_F(x)-N_F(x_*)\|+ \sqrt{2\theta}{\|S_F(x)\|}.
\]
It is easy to see from CondG procedure that  $\mbox{CondG}(x,x,0)=x,$ for all $x\in C$.
Hence,  since $F(x_*)=0$ implies that $N_F(x_*)=x_*$, we have $ \mbox{CondG}(N_F(x^*),x^*,0)=x^*$. 
Thus,  the last inequality gives
\begin{equation}\label{eq:127890}
  \| \mbox{CondG}(N_F(x),x,\theta\|S_{F}(x)\|^2)-x_{*}\|\leq  \|N_F(x)-x_*\|+ \sqrt{2\theta}{\|S_F(x)\|}.
\end{equation}
On the other hand, using \eqref{NF} and $F(x_*)=0$, we have
$$
N_F(x)-x_*=F'(x)^{-1}\left[ F(x_*)-F(x)-F'(x)(x_*-x)\right],
$$
which combined with Lemma \ref{lemgeral:relateF_nf} (a) and (c), and the fact that $f'(\|x-x_*\|)<0$ gives
$$
 \|N_F(x)-x_*\|\leq\frac{ f' (\|x-x_*\|)\|x-x_*\|-f(\|x-x_*\|)}{|f'(\|x-x_*\|)|}=\frac{ f (\|x-x_*\|)}{f' (\|x-x_*\|)}-\|x-x_*\|.$$
Hence, it follows from  \eqref{eq:127890},  \eqref{SF} and Lemma~\ref{lemgeral:relateF_nf}(b) that
\begin{align*}
  \| \mbox{CondG}(N_F(x),x,\theta\|S_{F}(x)\|^2)-x_{*}\|&\leq  (1+\sqrt{2\theta})\frac{ f (\|x-x_*\|)}{f' (\|x-x_*\|)}-\|x-x_*\|\\
 &= (1+\sqrt{2\theta})|n_f(\|x-x_*\|)|+\sqrt{2\theta}\|x-x_*\|,
\end{align*}
where the equality is due to definition of $n_f$ in \eqref{eq:def.nf} and Proposition~\ref{pr:incr1}~(a). Thus, \eqref{o698} is proved.

Now, since $\sqrt{2\theta}\leq \lambda$ and $0<\|x-x_*\|<r\leq \rho$, it follows from  \eqref{eq:001} with $t=\|x-x_*\|$ that
$$
(1+\sqrt{2\theta})|n_f(\|x-x_*\|)|+\sqrt{2\theta}\|x-x_*\|<\|x-x_*\|<r,
 $$
 which together with \eqref{o698} gives $\mbox{CondG}(N_F(x),x,\theta\|S_{F}(x)\|^2)\in  B(x^*,r)$.
 Therefore, \eqref{eq:contractionCondG} follows from the fact that points generated by CondG procedure belong to $C$. 
 \end{proof}

In the following, we state and  prove our main convergence result.

\begin{theorem}\label{th:nt} 
Let $\lambda, \rho$ and $\kappa$ be 
as  in~\eqref{rho} and \eqref{kappa}, and  consider 
\begin{equation}\label{radius}
r=\min\{\rho,\kappa\}.
\end{equation}
Assume that  {\bf A1} holds, and 
let  $\{\theta_k\}$ and $x_0$ be given in step~0 of Newton-CondG method. If $\{\theta_k\} \subset [0,\lambda^2/2]$ and   $x_0\in C\cap B(x_*, r)\backslash \{x_*\}$, then   Newton-CondG method  generates a sequence  $\{x_k\}$ which is
contained in $B(x_*,r)\cap C$, converges to  $x_*$ and satisfies
\begin{equation}\label{eq:xk_linearconv}
    \limsup_{k \to \infty} \; [{\|x_{k+1}-x_*\|}\big{/}{\|x_k-x_*\|}]\leq\sqrt{2\tilde\theta},
\end{equation}
where $\tilde \theta=\limsup_{k\to \infty}\theta_k$.
Moreover, given $0\leq p\leq1$ and $n_f$ as in \eqref{eq:def.nf}, if  the following assumption holds
\begin{itemize}
  \item[{\bf  h3.}] the function  $(0,\, \nu) \ni t \mapsto |n_f(t)|/t^{p+1}$ is  strictly increasing;
\end{itemize}
then, for all integer $k\geq 0$, we have
\begin{equation}\label{eq:xkispconvergent}
\|x_{k+1}-x_*\| \leq
(1+\lambda)\frac{|n_f(t_0)|}{{t_0}^{p+1}}\|x_k-x_*\|^{p+1}+\lambda\|x_k-x_*\|,
\end{equation}
and
\begin{equation}\label{eq:tk_majorizes_xk}
\|x_{k}-x_*\|\leq t_k, 
\end{equation}
where $\{t_k\}$ is as defined in \eqref{eq:tknk} with $t_0=\|x_0-x^*\|$.
\end{theorem}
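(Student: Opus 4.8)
The plan is to run a single induction showing $x_k\in C\cap B(x_*,r)$ for every $k$, and then to extract the convergence and rate statements from the scalar results of Section~\ref{sec:PMF}. First I would translate the algorithmic quantities into the language of Section~\ref{sec:proof}: whenever $x_k\in B(x_*,r)$, the inclusion $r\le\min\{\kappa,\nu\}$ lets Lemma~\ref{lemgeral:relateF_nf} guarantee that $F'(x_k)$ is invertible, so Step~1 is executable and \eqref{aa0} gives $s_k=-S_F(x_k)$, $y_k=N_F(x_k)$, whence $x_{k+1}=\mbox{CondG}(N_F(x_k),x_k,\theta_k\|S_F(x_k)\|^2)$. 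I would also note in passing that Lemma~\ref{lemgeral:relateF_nf}(a),(c) together with \eqref{eq:001} force $x_*$ to be the unique zero of $F$ in $B(x_*,r)$, so finite termination in Step~1 can only occur at $x_*$ and is harmless; we may therefore assume the method produces an infinite sequence.

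Next I would carry out the induction. The base case is the hypothesis $x_0\in C\cap B(x_*,r)\backslash\{x_*\}$. For the inductive step, with $x_k\in C\cap B(x_*,r)$ and $x_k\ne x_*$, I apply Lemma~\ref{l:wdef} with $x=x_k$ and $\theta=\theta_k$ — legitimate since $\theta_k\le\lambda^2/2$ yields $\sqrt{2\theta_k}\le\lambda$ — to obtain $x_{k+1}\in C\cap B(x_*,r)$ together with the one-step estimate
\[
a_{k+1}\le(1+\sqrt{2\theta_k})\,|n_f(a_k)|+\sqrt{2\theta_k}\,a_k,\qquad a_j:=\|x_j-x_*\|.
\]
Since $a_k\in(0,\rho)$, bounding $\sqrt{2\theta_k}\le\lambda$ and using \eqref{eq:001} at $t=a_k$ gives $a_{k+1}<a_k$; this closes the induction and shows $\{a_k\}$ is strictly decreasing. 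Being bounded below, $a_k\downarrow a_*\in[0,r)$; passing to the limit in the one-step estimate (bounding $\sqrt{2\theta_k}\le\lambda$ and using continuity of $n_f$) yields $a_*\le(1+\lambda)|n_f(a_*)|+\lambda a_*$, which by \eqref{eq:001} is impossible unless $a_*=0$, so $x_k\to x_*$. Dividing the one-step estimate by $a_k>0$, letting $k\to\infty$, and using $|n_f(a_k)|/a_k\to0$ from Proposition~\ref{pr:incr1}(b) together with $\limsup_k\sqrt{2\theta_k}=\sqrt{2\tilde\theta}$ then gives \eqref{eq:xk_linearconv}.

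For the last part I would first observe that {\bf h3} (with the given $p$) makes $t\mapsto|n_f(t)|$ nondecreasing on $(0,\nu)$, being the product of the nonnegative nondecreasing maps $t\mapsto|n_f(t)|/t^{p+1}$ and $t\mapsto t^{p+1}$. Since $\{t_k\}$ with $t_0=a_0$ is well defined, decreasing and convergent by Corollary~\ref{cr:kanttk}, comparing its recursion \eqref{eq:tknk} with the one-step estimate and using this monotonicity (the coefficients $\sqrt{2\theta_k}$ being identical) gives, by an immediate induction, $a_k\le t_k$ for all $k$, which is \eqref{eq:tk_majorizes_xk}; in particular $a_k\le t_k\le t_0$. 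Finally, starting again from the one-step estimate, bounding $\sqrt{2\theta_k}\le\lambda$, writing $|n_f(a_k)|=\big(|n_f(a_k)|/a_k^{p+1}\big)a_k^{p+1}$ and applying {\bf h3} with $a_k\le t_0$ yields \eqref{eq:xkispconvergent}. I expect the only delicate points to be bookkeeping ones: making sure the hypothesis $x\ne x_*$ needed in Lemma~\ref{l:wdef} survives the induction (handled by the uniqueness remark and the strict decrease $a_{k+1}<a_k$), and checking that every argument of $f'$, $n_f$ and the quotient $|n_f(\cdot)|/(\cdot)^{p+1}$ stays in $(0,\nu)$, which holds because $0<a_k\le a_0<r\le\nu$. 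Beyond that, the argument is a direct assembly of Lemma~\ref{l:wdef}, \eqref{eq:001}, Proposition~\ref{pr:incr1}(b) and Corollary~\ref{cr:kanttk}.
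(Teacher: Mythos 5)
Your proposal is correct and follows essentially the same route as the paper's proof: induction on $x_k\in C\cap B(x_*,r)$ via Lemma~\ref{l:wdef}, the one-step estimate combined with \eqref{eq:001} for monotone convergence to $x_*$, Proposition~\ref{pr:incr1}(b) for \eqref{eq:xk_linearconv}, and the {\bf h3}-based comparison for \eqref{eq:xkispconvergent} and \eqref{eq:tk_majorizes_xk}. The only differences (proving $\|x_k-x_*\|\leq t_k$ before \eqref{eq:xkispconvergent}, using monotonicity of $|n_f|$ directly rather than the quotient $|n_f(t)|/t^{p+1}$, and the added uniqueness-of-zero remark) are cosmetic and harmless.
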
\begin{proof}
First of all,  it is easy to see from  \eqref{NF},  \eqref{SF} and Newton-CondG method that 
\begin{equation}\label{eq:239}
x_{k+1}=\mbox{CondG}(N_F(x_k),x_k,\theta_k\|S_F(x_k)\|^2).
\end{equation}
Hence, since   $x_0\in C\cap B(x_*, r)\backslash \{x_*\}$,  it follows from the first statement of Lemma~\ref{lemgeral:relateF_nf}, inclusion~\eqref{eq:contractionCondG} with $x=x_k$ and $\theta=\theta_k$, and  a simple induction argument that  Newton-CondG method  generates a sequence $\{x_k\}$  contained in $B(x_*,r)\cap C$.

We will now prove that  $\{x_k\}$ converges to  $x_*$.
Since for all  $k\geq 0$, $\|x_{k}-x_*\|<r\leq \rho$,  it follows from \eqref{eq:239} and
inequality~\eqref{o698} with  $x=x_{k}$ and $\theta=\theta_{k}$ that, for all $k\geq 0$,
\begin{equation}\label{eq:conv1}
\begin{array}{rcl}
\|x_{k+1}-x_*\| &\leq& (1+\sqrt{2\theta_k})|n_f(\|x_k-x_*\|)|+\sqrt{2\theta_k}\|x_k-x_*\|\\[2mm]
&\leq & (1+\lambda)|n_f(\|x_k-x_*\|)|+\lambda\|x_k-x_*\|\\[2mm]
&<& \|x_{k}-x_*\|,
\end{array}
\end{equation}
where the second and the third inequalities are due to  $\sqrt{2\theta_k}\leq \lambda$ and   \eqref{eq:001} with $t=\|x_k-x_*\|$, respectively. 
Hence, $\{\|x_{k}-x_*\| \}$ converges to some $\ell_* \in [0,\rho)$. Thus, as    $n_f(\cdot)$ is  continuous in~$[0, \rho)$,
 \eqref{eq:conv1} implies, in particular, that  
$$ \ell_{*} \leq (1+\lambda)|n_f(\ell_*)|+\lambda\ell_*.$$ 
Therefore, due to \eqref{eq:001}  we must have  $\ell_*=0$, proving that $x_k \rightarrow x_*$.

We also see from   \eqref{eq:conv1} that, for all $k \geq 0$, 
\[
\frac{\|x_{k+1}-x_*\|}{\|x_{k}-x_*\|}\leq (1+\sqrt{2\theta_k})\frac{|n_f(\|x_{k}-x_*\|)|}{\|x_{k}-x_*\|}+\sqrt{2\theta_k}.
\]
The asymptotic rate~\eqref{eq:xk_linearconv} follows by taking  limit superior in the last inequality as $k \to \infty$ and using  $\|x_{k}-x_*\|\to 0$, Proposition~\ref{pr:incr1}~(b) and 
$\limsup_{k\to \infty}\theta_k=\tilde \theta$.

In order to prove the second part of the theorem, let us assume that {\bf h3} holds.
In view of~\eqref{eq:conv1}, we have  $\|x_k-x_*\|\leq \|x_0-x^*\|=t_0$ and 
\begin{equation}\label{eq:conseqH3}
\|x_{k+1}-x_*\|\leq (1+\sqrt{2\theta_k})\frac{|n_f(\|x_k-x_*\|)|}{\|x_k-x_*\|^{p+1}}\|x_k-x_*\|^{p+1}+\sqrt{2\theta_k}\|x_k-x_*\|,\quad  \forall k\geq 0\,.
\end{equation}
Therefore,  \eqref{eq:xkispconvergent} follows from  assumption {\bf h3} and $\sqrt{2\theta_k}\leq \lambda$.

Let us now show inequality \eqref{eq:tk_majorizes_xk} by induction. Since $t_0=\|x_0-x_*\|$, it trivially holds for $k=0$. Assume that $\|x_{k}-x_*\|\leq t_k$ for some $k\geq 0$.  Hence, \eqref{eq:conseqH3} together with {\bf h3} yields
\begin{align*}
\|x_{k+1}-x_*\| &\leq (1+\sqrt{2\theta_k})\frac{|n_f(t_k)|}{t_k^{p+1}}\|x_k-x_*\|^{p+1}+\sqrt{2\theta_k}\|x_k-x_*\|\\
                       &\leq (1+\sqrt{2\theta_k}){|n_f(t_k)|}+\sqrt{2\theta_k} t_k=t_{k+1}.\end{align*}
Therefore,  inequality \eqref{eq:tk_majorizes_xk}  holds for $k+1$, concluding the proof.
\end{proof}
\begin{remark}\label{remark_maintheo}
It is worth mentioning that if the error sequence $\{\theta_k\}$ given in step~0 of Newton-CondG method converges to zero, then it follows from \eqref{eq:xk_linearconv} that the sequence $\{x_k\}$ generated by Newton-CondG method is superlinear convergent to $x^*$. Also, under assumption {\bf h3}, it follows from \eqref{eq:xkispconvergent} that if $\lambda=0$, then $\{\|x_{k+1}-x^*\|/ \|x_k-x^*\|^{p+1}\}$ is bounded, improving the superlinear convergence of $\{x_k\}$. However, we shall point out that $\lambda=0$ implies that 
$\theta_k=0$ for all $k$,  which in turn may impose  a stringent  stopping criterium for CondG procedure (see step {\bf P2} with $\varepsilon=0$).
 In the next section, we consider two classes of nonlinear functions whose  majorant functions satisfy {\bf h3}.
\end{remark}
\section{Convergence results  under H\"{o}lder-like and Smale  conditions} \label{sec:HolderSmale}

In this section, we  specialize Theorem \ref{th:nt}  for two classes of functions. In the first one, $F'$ satisfies a H\"{o}lder-like condition \cite{F10,huangy2004}, and in the second one,  $F$ is an analytic function  satisfying a Smale condition \cite{Shen201029,S86}.
\begin{theorem}\label{th:HV} 
Let  $\kappa=\kappa(\Omega,\infty)$ as defined in~\eqref{kappa}. Assume
  that there exist a constant $K>0$ and $ 0< p \leq 1$ such that
\begin{equation}\label{holder_cond}
\left\|F'(x_*)^{-1}(F'(x)-F'(x_*+\tau(x-x_*)))\right\|\leq  K(1-\tau^p) \|x-x_*\|^p, \quad   x\in B(x_*, \kappa), \quad \tau \in [0,1].
\end{equation}
Take $\lambda\in [0,1)$ and 
let $$\bar r:=\min \left\{\kappa, \left[\frac{(1-\lambda)(p+1)}{K(2p+1-\lambda)}\right]^{1/p}\right\}.$$
 If $\{\theta_k\} \subset [0,\lambda^2/2]$ and   $x_0\in C\cap B(x_*, \bar r)\backslash \{x_*\}$, then   Newton-CondG method  generates a sequence  $\{x_k\}$ which is
contained in $B(x_*,\bar r)\cap C$, converges to  $x_*$ and satisfies
$$
\|x_{k+1}-x_*\| \leq
\frac{(1+\lambda) pK}{(p+1)[1- K\,\|x_0-x_*\|^{p}]}\|x_k-x_*\|^{p+1}+\lambda\|x_k-x_*\|,
\qquad  \forall k\geq 0.
$$
Moreover, if  $t_0=\|x_0-x^*\|$, then there holds
$$
\|x_{k+1}-x_*\|\leq t_{k+1}:=\frac{(1+\lambda) \,p K t_{k}^{p+1}}{(p+1)[1-Kt_k^{p}]}+ \lambda, \qquad  \forall k\geq 0.
$$
\end{theorem}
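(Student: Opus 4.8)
The plan is to reduce everything to an application of Theorem \ref{th:nt} together with the explicit majorant function $f(t) = \frac{K}{p+1}t^{p+1} - t$ already exhibited in the discussion following Definition \ref{deff:maj_funct}. First I would verify that $f$ is a majorant function for $F$ on $B(x_*,\kappa)$: condition \eqref{holder_cond} is precisely the majorant inequality \eqref{Hyp:MH} for this $f$ (since $f'(t) = Kt^p - 1$, so $f'(\|x-x_*\|) - f'(\tau\|x-x_*\|) = K(1-\tau^p)\|x-x_*\|^p$), and {\bf h1}, {\bf h2} hold trivially ($f(0)=0$, $f'(0)=-1$, and $f'$ strictly increasing on $[0,\infty)$ since $p>0$). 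Thus {\bf A1} holds with this $f$.

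Next I would compute the scalars $\nu$, $\rho$ and the radius $r$ for this $f$ and show $r \ge \bar r$. From $f'(t) = Kt^p - 1 = 0$ we get $\nu = K^{-1/p}$. The Newton map is $n_f(t) = t - f(t)/f'(t) = \frac{-f(t) + tf'(t)}{f'(t)}$; a direct computation gives $|n_f(t)| = \frac{p K t^{p+1}}{(p+1)(1 - Kt^p)}$ for $t\in(0,\nu)$. Then one checks that the inequality defining $\rho$ in \eqref{rho}, namely $(1+\lambda)\frac{|n_f(t)|}{t} + \lambda < 1$, is equivalent to $(1+\lambda)\frac{pKt^p}{(p+1)(1-Kt^p)} < 1 - \lambda$, which after clearing denominators rearranges to $Kt^p\big[(1+\lambda)p + (p+1)(1-\lambda)\big] < (1-\lambda)(p+1)$, i.e. $Kt^p(2p+1-\lambda) < (1-\lambda)(p+1)$, i.e. $t < \big[\frac{(1-\lambda)(p+1)}{K(2p+1-\lambda)}\big]^{1/p}$. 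This threshold is $\le \nu$ (since $\lambda\in[0,1)$ makes the bracketed ratio $\le 1$), so $\rho$ equals exactly that threshold, and hence $r = \min\{\rho,\kappa\} = \bar r$.

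Now I would verify hypothesis {\bf h3}: the map $t\mapsto |n_f(t)|/t^{p+1} = \frac{pK}{(p+1)(1-Kt^p)}$ is strictly increasing on $(0,\nu)$ because $1 - Kt^p$ is strictly decreasing and positive there. With {\bf A1}, $\{\theta_k\}\subset[0,\lambda^2/2]$ and $x_0 \in C\cap B(x_*,\bar r)\setminus\{x_*\}$ all in place, Theorem \ref{th:nt} immediately gives that $\{x_k\}$ stays in $B(x_*,\bar r)\cap C$ and converges to $x_*$. For the quantitative estimate, \eqref{eq:xkispconvergent} reads $\|x_{k+1}-x_*\| \le (1+\lambda)\frac{|n_f(t_0)|}{t_0^{p+1}}\|x_k-x_*\|^{p+1} + \lambda\|x_k-x_*\|$; substituting $\frac{|n_f(t_0)|}{t_0^{p+1}} = \frac{pK}{(p+1)(1-Kt_0^p)}$ with $t_0 = \|x_0-x_*\|$ yields exactly the displayed bound. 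Finally, the majorizing sequence claim follows from \eqref{eq:tk_majorizes_xk}: I would just rewrite the recursion \eqref{eq:tknk} for this $f$, using $\sqrt{2\theta_k}\le\lambda$ and the formula for $|n_f(t_k)|$, to obtain $t_{k+1} = (1+\sqrt{2\theta_k})|n_f(t_k)| + \sqrt{2\theta_k}t_k \le (1+\lambda)\frac{pKt_k^{p+1}}{(p+1)(1-Kt_k^p)} + \lambda t_k$ — note the stated recursion in the theorem appears to be missing a factor $t_k$ in the $\lambda$ term, which I would flag and correct to $+\lambda t_k$.

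The only genuinely delicate point is the algebraic identification of $\rho$ with the closed-form radius $\bar r$ — one must be careful that the supremum in \eqref{rho} is actually attained at the stated threshold (i.e. that the inequality holds for all smaller $t$ and fails beyond), which uses monotonicity of $t\mapsto \frac{pKt^p}{(p+1)(1-Kt^p)}$ on $(0,\nu)$, and that this threshold does not exceed $\nu$. Everything else is routine substitution into Theorem \ref{th:nt}.
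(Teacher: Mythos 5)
Your proposal is correct and follows essentially the same route as the paper: it exhibits $f(t)=\frac{K}{p+1}t^{p+1}-t$ as the majorant function, verifies \textbf{h1}--\textbf{h3}, computes $\nu=K^{-1/p}$ and the closed form of $\rho$ so that $\bar r=\min\{\kappa,\rho\}=r$, and then invokes Theorem~\ref{th:nt}; you merely supply the algebra the paper labels ``easy to see.'' Your observation that the displayed recursion should read $+\lambda t_k$ rather than $+\lambda$ is also accurate.
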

\begin{proof}
It is easy to prove that  $f:[0, \infty)\to \mathbb{R}$ defined by
$
f(t)=Kt^{p+1}/(p+1)-t
$
is a majorant function for $F$ on $B(x_*,\kappa)$, which   satisfies   {\bf h3} in Theorem \ref{th:nt}.  Moreover,  in this case, it is easily seen that $\nu$ and $\rho$, as defined in  \eqref{nu} and \eqref{rho}, respectively,  satisfy
$$
\rho=\left[\frac{(1-\lambda)(p+1)}{K(2p+1-\lambda)}\right]^{1/p} < \nu=[1/ K]^{1/p},
$$
and, as a consequence,  $\bar r=\min \{\kappa,\; \rho\}=r$ (see \eqref{radius}). Therefore, the statements of the theorem follow from
 Theorem~\ref{th:nt}.
\end{proof}
\begin{remark}
As already mentioned, if the function $F$ has Lipschitz continuous derivative then \eqref{holder_cond} is satisfied with $p=1$, and then Theorem \ref{th:HV}  holds for such a class of nonlinear functions. Additionally to the assumptions  of Theorem \ref{th:HV}, if  $\{\theta_k\}$ converges to zero, then $\{x_k\}$ converges superlinear to $x^*$.
Moreover, if $\lambda=0$ (i.e., $\theta_k=0$ for all k), we obtain the quadratic convergence rate of $\{x_k\}$.
\end{remark}


In the previous theorem, we analyzed convergence of Newton-CondG method under H\"{o}lder-like condition. Next, we present a similar result for the class of analytic functions satisfying a Smale condition.
\begin{theorem}\label{theo:Smale}
Assume that $F:{\Omega}\to \mathbb{R}^{n}$ is an analytic function and 
\begin{equation} \label{eq:SmaleCond}
\gamma := \sup _{ n > 1 }\left\| \frac
{F'(x_*)^{-1}F^{(n)}(x_*)}{n !}\right\|^{1/(n-1)}<+\infty.
\end{equation}
Let   $\lambda\in [0,1)$ be given  and  compute 
$$
\bar r:=\min \left\{\kappa,\frac{ 5-3\lambda-\sqrt{(5-3\lambda)^2-8(1-\lambda)^2}}{4(1-\lambda)\gamma}\right\},
$$
where $\kappa=\kappa(\Omega,1/\gamma)$ is as defined in~\eqref{kappa}. 
If $\{\theta_k\} \subset [0,\lambda^2/2]$ and   $x_0\in C\cap B(x_*, \bar r)\backslash \{x_*\}$, then   Newton-CondG method  generates a sequence  $\{x_k\}$ which is
contained in $B(x_*,\bar r)\cap C$, converges to  $x_*$ and satisfies
$$
\|x_{k+1}-x_*\| \leq
\frac{\gamma  }{2(1- \gamma \|x_0-x_*\|)^{2}-1}\|x_k-x_*\|^{2}+\sqrt{2\theta_k}{\|x_k-x_*\|},
\qquad \forall k\geq 0.
$$
Moreover, if  $t_0=\|x_0-x^*\|$, then there holds
$$
\|x_{k+1}-x_*\|\leq t_{k+1}:=\frac{(1+\lambda) \gamma t_{k}^{2}}{2(1-\gamma t_k)^2-1}+\sqrt{2\theta_k} \lambda, \qquad \forall k\geq 0.
$$
\end{theorem}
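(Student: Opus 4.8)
The plan is to follow the template of the proof of Theorem~\ref{th:HV}: exhibit an explicit majorant function $f$ for $F$ on $B(x_*,\kappa)$, check that it satisfies conditions {\bf h1}--{\bf h3}, compute the scalars $\nu$, $\rho$ and the Newton map $n_f$ attached to it, verify that $\bar r$ coincides with $r=\min\{\kappa,\rho\}$ from~\eqref{radius}, and then invoke Theorem~\ref{th:nt}. The natural candidate is the classical Smale majorant function
\[
f(t)=\frac{t}{1-\gamma t}-2t,\qquad t\in\left[0,\tfrac{1}{\gamma}\right),
\]
so that here $R=1/\gamma$, consistently with $\kappa=\kappa(\Omega,1/\gamma)$ in the statement.

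First I would record $f'(t)=1/(1-\gamma t)^{2}-2$ and $f''(t)=2\gamma/(1-\gamma t)^{3}$, which immediately give $f(0)=0$, $f'(0)=-1$ (condition {\bf h1}) and that $f'$ is strictly increasing on $[0,1/\gamma)$ (condition {\bf h2}). To obtain assumption {\bf A1} it then remains to prove the majorant inequality~\eqref{Hyp:MH}, and this is the one genuinely nontrivial step and, I expect, the main obstacle, although it is entirely classical: from the Smale condition~\eqref{eq:SmaleCond} one deduces, by expanding $F''$ in its Taylor series about $x_*$, the pointwise bound
\[
\left\|F'(x_*)^{-1}F''(x)\right\|\le \frac{2\gamma}{(1-\gamma\|x-x_*\|)^{3}}=f''(\|x-x_*\|),\qquad \|x-x_*\|<1/\gamma,
\]
see~\cite{S86,Shen201029}; integrating this estimate along the segment joining $x_*+\tau(x-x_*)$ to $x$ then yields~\eqref{Hyp:MH}.

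Next I would compute the scalars attached to $f$. Solving $f'(\nu)=0$ gives $\nu=(1-1/\sqrt{2})/\gamma$, and a short computation starting from $n_f(t)=t-f(t)/f'(t)$ gives
\[
n_f(t)=-\frac{\gamma t^{2}}{2(1-\gamma t)^{2}-1},\qquad\text{so that}\qquad \frac{|n_f(t)|}{t^{2}}=\frac{\gamma}{2(1-\gamma t)^{2}-1},
\]
which is strictly increasing on $(0,\nu)$; hence {\bf h3} holds with $p=1$. For $\rho$, writing $u=\gamma t$ and using that $2(1-u)^{2}-1>0$ on $(0,\nu)$, the defining inequality $(1+\lambda)|n_f(t)|/t+\lambda<1$ in~\eqref{rho} is equivalent to the upward quadratic inequality
\[
2(1-\lambda)u^{2}-(5-3\lambda)u+(1-\lambda)>0,
\]
whose discriminant equals $17-14\lambda+\lambda^{2}>0$ for every $\lambda\in[0,1)$; its smaller root is $<\gamma\nu$ and produces
\[
\rho=\frac{5-3\lambda-\sqrt{(5-3\lambda)^{2}-8(1-\lambda)^{2}}}{4(1-\lambda)\gamma}.
\]
Therefore $\bar r=\min\{\kappa,\rho\}=r$.

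With {\bf A1}, {\bf h3} and the identity $\bar r=r$ established, the conclusions follow directly from Theorem~\ref{th:nt}: the well-definedness, the inclusion $\{x_k\}\subset B(x_*,\bar r)\cap C$ and $x_k\to x_*$ are the first part of that theorem, while the estimate for $\|x_{k+1}-x_*\|$ follows from~\eqref{eq:xkispconvergent} (equivalently from~\eqref{eq:conseqH3}) with $p=1$ and $(1+\lambda)|n_f(t_0)|/t_0^{2}=(1+\lambda)\gamma/(2(1-\gamma\|x_0-x_*\|)^{2}-1)$, and the majorization $\|x_{k+1}-x_*\|\le t_{k+1}$ follows from~\eqref{eq:tk_majorizes_xk} once the recursion~\eqref{eq:tknk} is written out for this particular $f$, namely $t_{k+1}=(1+\sqrt{2\theta_k})\,\gamma t_k^{2}/(2(1-\gamma t_k)^{2}-1)+\sqrt{2\theta_k}\,t_k$.
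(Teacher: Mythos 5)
Your proposal is correct and follows essentially the same route as the paper: exhibit the Smale majorant function $f(t)=t/(1-\gamma t)-2t$, verify {\bf h1}--{\bf h3} (with $p=1$), compute $\nu$, $n_f$ and $\rho$ so that $\bar r=\min\{\kappa,\rho\}=r$, and invoke Theorem~\ref{th:nt}. The only difference is cosmetic: the paper cites \cite[Theorem~14]{MAX1} for the majorant property and \cite[Proposition~7]{MAX1} (convexity of $f'$) for {\bf h3}, whereas you sketch the classical Taylor-series/integration argument and verify $|n_f(t)|/t^2=\gamma/(2(1-\gamma t)^2-1)$ directly, both of which are sound.
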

\begin{proof}
Under the assumptions of the theorem,
 the  real function $f:[0,1/\gamma) \to \mathbb{R}$
defined by $ f(t)={t}/{(1-\gamma t)}-2t$ is a majorant function for $F$ on $B(x_*, 1/\gamma)$, 
see for instance, \cite[Theorem~14]{MAX1}. Since $f'$ is convex, it  satisfies   {\bf h3} in Theorem \ref{th:nt} with p=1, see  \cite[Proposition~7]{MAX1}.
Moreover,  in this case, it is easily seen that $\nu$ and $\rho$, as defined in  \eqref{nu} and \eqref{rho}, respectively,  satisfy
$$\rho=\frac{ 5-3\lambda-\sqrt{(5-3\lambda)^2-8(1-\lambda)^2}}{4(1-\lambda)\gamma}, \qquad \nu=\frac{\sqrt{2}-1}{\sqrt{2}\gamma}, \qquad \rho<\nu< \frac{1}{\gamma},
$$
and, as a consequence,  $\bar r=\min \{\kappa,\; \rho\}=r$  (see \eqref{radius}). Therefore, the statements of the theorem follow from
 Theorem~\ref{th:nt}.
\end{proof}

\section{Numerical experiments} \label{NunEx}

This section summarizes the results of the numerical experiments we carried out in order to verify the effectiveness of Newton-CondG method. 
 In the following experiments, we considered  25 box-constrained nonlinear systems, i.e.,  problem~\eqref{eq:p} with $ C=\{x \in \mathbb{R}^n: l \leq x \leq u\}, $
 where $l, u \in \mathbb{R}^n$.
  We analyze the  set of 25  problems specified  in Table~1. 
 These well-known problems come from  different  applications and some of them are considered challenging ones.

\begin{table}[h]
\centering
\caption{The box-constrained nonlinear systems considered}
\vspace{0.5cm}
{\small
\begin{tabular}{|c|c|c|}
\hline
Problem & Name and souce  & n  \\ 
\hline     
Pb 1  & Himmelblau function \cite[14.1.1]{Jones:2000}   & 2 \\
Pb 2  & Equilibrium Combustion \cite[14.1.2]{Jones:2000}& 5 \\
Pb 3  & Bullard-Biegler system \cite[14.1.3]{Jones:2000} & 2  \\
Pb 4  & Ferraris-Tronconi system \cite[14.1.4]{Jones:2000} & 2  \\ 
Pb 5  & Brown's almost linear system \cite[14.1.5]{Jones:2000} & 5  \\ 
Pb 6  & Robot kinematics problem \cite[14.1.6]{Jones:2000} & 8 \\ 
Pb 7  & Circuit design problem \cite[14.1.7]{Jones:2000} & 9 \\ 
Pb 8  & Series of CSTRs $R=0.935$ \cite[14.1.8]{Jones:2000} & 2 \\ 
Pb 9  & Series of CSTRs $R=0.940$ \cite[14.1.8]{Jones:2000} & 2  \\ 
Pb 10  & Series of CSTRs $R=0.945$ \cite[14.1.8]{Jones:2000} & 2  \\ 
Pb 11  & Series of CSTRs $R=0.950$ \cite[14.1.8]{Jones:2000} & 2  \\ 
Pb 12  & Series of CSTRs $R=0.955$ \cite[14.1.8]{Jones:2000} & 2 \\ 
Pb 13 & Series of CSTRs $R=0.960$ \cite[14.1.8]{Jones:2000} & 2  \\ 
Pb 14 & Series of CSTRs $R=0.965$ \cite[14.1.8]{Jones:2000} & 2  \\ 
Pb 15 & Series of CSTRs $R=0.970$ \cite[14.1.8]{Jones:2000} & 2 \\ 
Pb 16 & Series of CSTRs $R=0.975$ \cite[14.1.8]{Jones:2000} & 2 \\ 
Pb 17  & Series of CSTRs $R=0.980$ \cite[14.1.8]{Jones:2000} & 2  \\ 
Pb 18  & Series of CSTRs $R=0.985$ \cite[14.1.8]{Jones:2000} & 2  \\ 
Pb 19  & Series of CSTRs $R=0.990$ \cite[14.1.8]{Jones:2000} & 2  \\ 
Pb 20  & Series of CSTRs $R=0.995$ \cite[14.1.8]{Jones:2000} & 2  \\ 
Pb 21  & Chemical reaction problem \cite[Problem~5]{sandra} & 67  \\ 
Pb 22  & A Mildly-Nonlinear BVP \cite[Problem~7]{sandra} & 451  \\ 
Pb 23  & H-equation, $c=0.99$ \cite[Problem~4]{more} & 100  \\ 
Pb 24  & H-equation, $c=0.9999$ \cite[Problem~4]{more} & 100  \\ 
Pb 25  & A Two-bar Framework \cite[Problem~1]{sandra} & 5  \\ 
\hline  
\end{tabular}
}
\end{table}

The computational results were obtained using MATLAB R2015a on
 a 2.5 GHz intel Core~i5 with 4GB of RAM and OS X system. 
In our implementation, the Jacobian matrices were approximated by finite differences
and  the error parameter $\theta_k$ was set equal to $10^{-5}$ for all $k$.
Moreover, CondG Procedure   stopped when either the required accuracy  was obtained  or the maximum of $300$ iterations were performed.
In order to compare Newton-CondG with other methods, we decided to keep the stopping criteria  $\|F(x_k)\|_{\infty}\leq10^{-6}$ and a failure  was declared if the number of iterations was greater than $300$. Furthermore, we also tested the method with  initial points  $x_0= l+ 0.25\gamma (u -l) $ with $\gamma=1,2,3$. However, since the choice $\gamma=3$ corresponds to an initial point that is a solution of Pb5 and  the Jacobian matrices of Pb6 and Pb22 are singular at the initial point obtained with $\gamma=2$, we used $\gamma=2.5$ in these cases.

Table~2  shows the performance of Newton-CondG method  for solving  23 of the 25 problems considered.   The other two problems (Pb~3 and Pb~7)  do not appear in Table~2, because the  method was not able to solve   them for none of the three choices of initial points.
 In the table, ``$\gamma$" and ``Iter" are  the constant $\gamma$ used to compute initial point $x_0$ and the number of iterations of Newton-CondG method, respectively, ``Time"  is the CPU time in seconds  and  ``$\|F\|_{\infty}$" is the infinity norm of $F$ at the final iterate $x_k$. Finally, the symbol ``$*$" indicates a failure.

\begin{table}[h]
\centering
\caption{Perfomance of Newton-CondG method  for 23 problems described in Table~1 }
\vspace{0.5cm}
{\small
\begin{tabular}{|c|c|c|c|c||c|c|c|c|c|}
\hline
  Problem& $\gamma$ & Iter&Time& $\|F\|_{\infty}$&Problem& $\gamma$ & Iter&Time& $\|F\|_{\infty}$ \\ 
\hline     
1 &1 &6  & $5.5e$-$2$ &$2.7e$-$8$&15 &1 &5  &$3.6e$-$2$ &$1.5e$-$7$ \\ 
   &2 &4  & $1.5e$-$2$ &$1.2e$-$9$& &2 &7  &$1.1e$-$2$ &$4.6e$-$12$ \\ 
&3 &4  &$1.3e$-$2$ &$1.0e$-$7$ & &3 &9  &$1.2e$-$2$ &$9.1e$-$10$ \\ 
\hline    
   2 &1 &11 &$2.7e$-$2$ & $1.0e$-$7$&16&1&8  &$1.7e$-$2$ &$5.0e$-$7$ \\
      &2 &13  &$3.0e$-$2$ &$4.7e$-$7$&&2& 6 &$9.7e$-$3$ &$1.2e$-$12$ \\ 
      &3 &14  &$3.3e$-$2$ & $8.3e$-$7$ &&3& 9  &$1.1e$-$2$ &$4.6e$-$11$ \\  
\hline    
    4 &1 &4  &$6.6e$-$2$ &$1.2e$-$7$&17  &1 &3  &$1.1e$-$2$ &$1.9e$-$7$\\ 
      &2 &5  &$1.6e$-$2$ &$1.2e$-$9$&  & 2 &5  &$1.3e$-$2$ &$7.3e$-$10$ \\ 
      &3  &5  &$1.7e$-$2$ &$4.5e$-$13$&& 3 &9  &$7.4e$-$3$ &$4.8e$-$13$ \\  
 \hline    
      5 &1 &10 &$2.9e$-$2$ &$4.2e$-$8$& 18 & 1 &3  &$3.1$e-$2$ &$1.8e$-$8$ \\ 
      &2 &  *& &&&  2 &5  &$9.7e$-$3$ &$3.2$e-$14$ \\ 
      &2.5 & * & &  && 3 &8  &$1.0e$-$2$ &$2.1e$-$8$ \\     
\hline    
        6 &1  &5  & $1.4e$-$1$ &$6.3e$-$8$ &19&1 &3  &$3.9e$-$2$ &$7.9e$-$10$ \\ 
         &2.5 &5  & $2.2e$-$2$ &$1.3e$-$11$&& 2 &51  &$4.1e$-$2$ &$7.9e$-$10$\\ 
          &3 &5 &$1.0e$-$1$ &$1.0e$-$11$ &&3 &45  &$4.0e$-$2$ &$7.9e$-$10$ \\     
          \hline    
       8&1 &17 &$3.3e$-$2$ &$1.5e$-$8$&20& 1 &3  &$2.8e$-$2$ &$4.9e$-$12$  \\ 
      &2 &28 &$1.8e$-$1$ &$1.5e$-$8$&& 2 &6  &$9.4e$-$3$ &$5.1e$-$12$ \\ 
      &3 & 10  &$1.9e$-$2$ &$6.3e$-$12$ && 3  &11 &$1.4e$-$2$ &$5.1e$-$12$ \\        
      \hline    
            9 &1 &90  &$1.0e$-$1$ &$2.0e$-$7$ &21 &1 &20  &$4.6e$+$0$ &$6.5e$-$7$ \\ 
      &2 &  *& &&&2  &*&& \\
      &3 & 10  &$1.5e$-$2$ &$1.9e$-$12$ && 3 & *&&   \\     
   \hline    
      10 &1 &* & & &22 &1 &14 &$1.3e$+$1$ &$1.1e$-$7$ \\ 
      &2   &7  &$1.4$e-$2$ &$6.9e$-$8$&& 2.5 &16  &$1.5e$+$1$ &$1.9e$-$8$\\
      &3  &9  &$1.7e$-$2$ &$7.8e$-$7$&& 3 &20  &$1.9e$+$1$ &$1.3e$-$10$ \\           
           \hline    
    11 &1 &14  &$4.2e$-$2$ &$1.5e$-$10$ &23 &1 &5  &$4.5e$-$1$ &$4.0e$-$12$ \\ 
      &2 & 7  &$9.7e$-$3$ &$7.5e$-$12$&& 2 &6  &$5.2e$-$1$ &$3.8e$-$9$\\  
      &3 &9  &$1.5e$-$2$ &$3.4e$-$7$ && 3 &6  &$5.3e$-$1$ &$2.5e$-$10$ \\      
        \hline    
           12 &1 &24  &$4.5e$-$2$ &$5.9e$-$12$ &24 &1 &7  &$6.0e$-$1$ &$1.4e$-$7$ \\ 
      &2 &  6  &$9.3e$-$3$ &$6.0e$-$9$&  & 2 &9  &$7.4e$-$1$ &$6.7e$-$9$ \\
      &3 & 9  &$1.1e$-$2$ &$1.2e$-$7$ && 3 &7  &$6.0e$-$1$ &$4.8e$-$8$ \\   
   \hline    
    13 &1 &7  &$2.9e$-$2$ &$1.2e$-$8$ &25 &1 &18  &$1.5$e-$1$ &$4.3e$-$7$ \\ 
      &2 &6  &$9.5e$-$3$ &$1.5e$-$7$&&2 &20  &$1.0e$-$1$ &$7.9e$-$7$\\
      &3  &9  &$1.3e$-$2$ &$3.7e$-$8$&& 3 &21 &$4.5e$-$2$ &$6.8e$-$7$\\      
    \hline    
 14 &1 &5  &$4.3e$-$2$ &$6.9e$-$10$&&&&&  \\ 
   &2 & 8  &$1.7e$-$2$ &$6.0e$-$10$&&&&& \\ 
 &3 & 9  &$1.1e$-$2$ &$7.6e$-$9$&&&&&\\   
                
\hline  
\end{tabular}
}
\end{table}

From Table~2, we  see that 
our method successfully ended 63 times on a total of 75 runs which shows its robustness. For comparison purposes,
let us consider the performance of three methods analyzed in \cite{morini1},  namely,   Scaled Trust-Region Newton (STRN),  
 Active Set-Type Newton (ASTN) and  Inexact Gauss-Newton-Type (IGNT) methods introduced in \cite{morini1,Mangasarian1,sandra}, respectively.
 Analyzing the numerical results in our Table~2 and the ones in  Tables 2, 3, 4 and 5 in \cite[Section~4]{morini1} for the 24 common  problems,  the numbers of success on a total of 72 runs
  are 60, 58, 55  and 65, for Newton-CondG, STRN, ASTN and IGNT methods, respectively.
 These results  indicate that Newton-CondG method is as effective as the other  methods aforementioned
 for the  set of problems considered.
 Moreover, it is worth to point out that  the numbers of the function and Jacobian evaluations of Newton-CondG method are equal to number of iterations where as usual we do not take into account  the functions evaluations due to finite-difference approximations of the Jacobians. However, for STRN, ASTN and IGNT
 methods the number of the function evaluations are, in general, greater than the number of iterations.
This lower cost evaluations may reflect in computational savings. 
Therefore,  we may conclude the  applicability  and effectiveness of our method.


 
\section*{Conclusion}

In this paper, we considered the problem of solving a constrained  system of nonlinear equations. We  proposed and analyzed a method which consists  of a combination of Newton and conditional gradient methods. The convergence analysis was done via the majorant conditions technique, which allowed us to prove convergence results for different families of nonlinear functions. Under reasonable assumptions, we were able to provide a convergence radius of the method and  
establish  some convergence rate results. 
 In order to show the performance of our method, we carried out some numerical experiments  and comparisons with some other methods were presented. It would be interesting for future research to combine the conditional gradient method with other Newton-like methods.


\begin{thebibliography}{10}

\bibitem{morini1}
{\sc Bellavia, S., Macconi, M., and Morini, B.}
\newblock An affine scaling trust-region approach to bound-constrained
  nonlinear systems.
\newblock {\em Appl. Num. Math. 44}, 3 (2003), 257 -- 280.

\bibitem{morini2}
{\sc Bellavia, S., Macconi, M., and Morini, B.}
\newblock Strscne: A scaled trust-region solver for constrained nonlinear
  equations.
\newblock {\em Comput. Optim. Appl. 28}, 1 (2004), 31--50.

\bibitem{Jones:2000}
{\sc {C}, {P}, {C}, {W}, {Z}, {S}, {J}, {C}, and {C}}.
\newblock {\em Handbook of Test Problems in Local and Global Optimization, in:
  Nonconvex Optimization and Its Applications, Volume 33, Kluwer Academic
  Publishers, 1999}.

\bibitem{Dunn1980}
{\sc Dunn, J.~C.}
\newblock Convergence rates for conditional gradient sequences generated by
  implicit step length rules.
\newblock {\em SIAM J. Control Optim. 18}, 5 (1980), 473--487.

\bibitem{F10}
{\sc Ferreira, O.~P.}
\newblock Local convergence of {N}ewton's method under majorant condition.
\newblock {\em J. Comput. Appl. Math. 235}, 5 (2011), 1515--1522.

\bibitem{MAX1}
{\sc Ferreira, O.~P., and Gon{\c{c}}alves, M. L.~N.}
\newblock Local convergence analysis of inexact {N}ewton-like methods under
  majorant condition.
\newblock {\em Comput. Optim. Appl. 48}, 1 (2009), 1--21.

\bibitem{MAX3}
{\sc Ferreira, O.~P., Gon{\c{c}}alves, M. L.~N., and Oliveira, P.~R.}
\newblock Convergence of the gauss-{N}ewton method for convex composite
  optimization under a majorant condition.
\newblock {\em SIAM J. Optim. 23}, 3 (2013), 1757--1783.

\bibitem{FS06}
{\sc Ferreira, O.~P., and Svaiter, B.~F.}
\newblock Kantorovich's majorants principle for {N}ewton's method.
\newblock {\em Comput. Optim. Appl. 42}, 2 (2009), 213--229.

\bibitem{NAV:NAV3800030109}
{\sc Frank, M., and Wolfe, P.}
\newblock An algorithm for quadratic programming.
\newblock {\em Naval Res. Logist. 3}, 1-2 (1956), 95--110.

\bibitem{Freund2014}
{\sc Freund, R., and Grigas, P.}
\newblock New analysis and results for the frank-wolfe method.
\newblock {\em Math. Program.\/} (2014), 1--32.

\bibitem{nemiro2014}
{\sc Harchaoui, Z., Juditsky, A., and Nemirovski, A.}
\newblock Conditional gradient algorithms for norm-regularized smooth convex
  optimization.
\newblock {\em Math. Program.\/} (2014), 1--38.

\bibitem{huangy2004}
{\sc Huang, Z.}
\newblock The convergence ball of {N}ewton's method and the uniqueness ball of
  equations under {H}\"older-type continuous derivatives.
\newblock {\em Comput. Math. Appl. 47}, 2-3 (2004), 247--251.

\bibitem{ICML2013_jaggi13}
{\sc Jaggi, M.}
\newblock Revisiting frank-wolfe: Projection-free sparse convex optimization.
\newblock In {\em Proceedings of the 30th International Conference on Machine
  Learning (ICML-13)\/} (2013), vol.~28, pp.~427--435.

\bibitem{Mangasarian1}
{\sc Kanzow, C.}
\newblock An active set-type {N}ewton method for constrained nonlinear systems.
\newblock In {\em Complementarity: Applications, Algorithms and Extensions},
  M.~Ferris, O.~Mangasarian, and J.-S. Pang, Eds., vol.~50 of {\em Appl.
  Optim.} Springer US, 2001, pp.~179--200.

\bibitem{Kan2}
{\sc Kanzow, C.}
\newblock Strictly feasible equation-based methods for mixed complementarity
  problems.
\newblock {\em Numer. Math. 89}, 1 (2001), 135--160.

\bibitem{sandra}
{\sc Kozakevich, D.~N., Martinez, J.~M., and Santos, S.~A.}
\newblock Solving nonlinear systems of equations with simple constraints.
\newblock {\em Comput. Appl. Math. 16\/} (1997), 214--235.

\bibitem{lan2015}
{\sc Lan, G., and Zhou, Y.}
\newblock Conditional gradient sliding for convex optimization.
\newblock {\em Avaliable on http://www.optimization-online.org/\/}.

\bibitem{Ya-2015}
{\sc Liu, Y.-F., Wang, X., Liu, X., and Ma, S.}
\newblock A scalable frank-wolfe based augmented lagrangian method for linearly
  constrained composite convex programming.
\newblock {\em Avaliable on http://www.optimization-online.org/\/}.

\bibitem{Ronny2013}
{\sc Luss, R., and Teboulle, M.}
\newblock Conditional gradient algorithmsfor rank-one matrix approximations
  with a sparsity constraint.
\newblock {\em SIAM Review 55}, 1 (2013), 65--98.

\bibitem{marinez2}
{\sc Martinez, M.~J.}
\newblock Quasi-inexact-{N}ewton methods with global convergent for solving
  constrained nonlinear systems.
\newblock {\em Nonlinear Anal. 30}, 1 (1997), 1--7.

\bibitem{more}
{\sc Mor\'e, J.}
\newblock {\em A collection of nonlinear model problems, in: E.L. Allgower, K.
  Georg (Eds.), Computational Solutions of Nonlinear Solutions of Equations,
  in: Lectures in Appl. Math., Vol. 26, American Mathematical Society,
  Providence, RI, 1990, pp. 723-762.}

\bibitem{Porcelli}
{\sc Porcelli, M.}
\newblock On the convergence of an inexact {G}auss-{N}ewton trust-region
  method for nonlinear least-squares problems with simple bounds.
\newblock {\em Optim. Letters 7}, 3 (2013), 447--465.

\bibitem{Shen201029}
{\sc Shen, W., and Li, C.}
\newblock Smale's $\alpha$-theory for inexact newton methods under the
  $\gamma$-condition.
\newblock {\em J. Math. Anal. Appl. 369}, 1 (2010), 29 -- 42.

\bibitem{S86}
{\sc Smale, S.}
\newblock {N}ewton's method estimates from data at one point.
\newblock In {\em The merging of disciplines: new directions in pure, applied,
  and computational mathematics ({L}aramie, {W}yo., 1985)}. Springer, New York,
  1986, pp.~185--196.

\bibitem{Zhang1}
{\sc Zhang, Y., and Zhu, D.-t.}
\newblock Inexact newton method via lanczos decomposed technique for solving
  box-constrained nonlinear systems.
\newblock {\em Appl. Math. Mech. 31}, 12 (2010), 1593--1602.

\bibitem{Zhu2005343}
{\sc Zhu, D.}
\newblock An affine scaling trust-region algorithm with interior backtracking
  technique for solving bound-constrained nonlinear systems.
\newblock {\em J. Comput. App. Math. 184}, 2 (2005), 343 -- 361.

\end{thebibliography}
 \end{document}